\documentclass[11pt]{amsart}
\usepackage{amsmath}
\usepackage{amssymb}
\usepackage{amscd}

\def\NZQ{\mathbb}               
\def\NN{{\NZQ N}}
\def\QQ{{\NZQ Q}}
\def\ZZ{{\NZQ Z}}
\def\RR{{\NZQ R}}

\def\P{\mathcal P}

\def\gr{\mbox{\rm gr}}

%
%
%
%

\newtheorem{Theorem}{Theorem}[section]
\newtheorem{Lemma}[Theorem]{Lemma}
\newtheorem{Corollary}[Theorem]{Corollary}
\newtheorem{Proposition}[Theorem]{Proposition}

\newtheorem{Example}[Theorem]{Example}

%
%
\let\epsilon\varepsilon
\let\phi=\varphi
\let\kappa=\varkappa

\def \a {\alpha}

%
%
\textwidth=15cm \textheight=22cm \topmargin=0.5cm
\oddsidemargin=0.5cm \evensidemargin=0.5cm \pagestyle{plain}
\begin{document}

\title{Growth of Rank 1 Valuation Semigroups}

\author{Steven Dale Cutkosky, Kia Dalili and Olga Kashcheyeva}
\thanks{The first author was partially supported by NSF }

\address{Steven Dale Cutkosky, Department of Mathematics,
328 Mathematical Sciences Bldg,
University of Missouri,
Columbia, MO 65211 USA
}\email{cutkoskys@missouri.edu}

\address{Kia Dalili, Department of Mathematics,
103 Mathematical Sciences Bldg,
University of Missouri,
Columbia, MO 65211 USA
}\email{kia@math.missouri.edu}

\address{Olga Kashcheyeva, Department of Mathematics,
University of Illinois, Chicago,
Chicago, Il 60607 USA}
\email{olga@math.uic.edu}

\maketitle

Let $(R,m_R)$ be a local domain, with quotient field $K$. Suppose that $\nu$ is a valuation of $K$
with valuation ring $(V,m_V)$, and that $\nu$ dominates $R$; that is, $R\subset V$ and $m_V\cap
R=m_R$. The possible value groups $\Gamma$ of $\nu$ have been extensively studied and classified,
including in the papers MacLane \cite{M}, MacLane and Schilling \cite{MS}, Zariski and Samuel
\cite{ZS}, and Kuhlmann \cite{K}. $\Gamma$ can be any ordered abelian group of finite rational rank (Theorem 1.1 \cite{K}). The
semigroup
$$
S^R(\nu)=\{\nu(f)\mid f\in m_R\backslash \{0\}\}
$$
is however not well understood, although it is known to encode important information about the
topology and  resolution of singularities of $\mbox{Spec}(R)$ and the ideal theory of $R$.

In Zariski and Samuel's classic book on Commutative Algebra \cite{ZS}, two general facts about the
semigroup $S^R(\nu)$ are proven (in Appendix 3 to Volume II).
\begin{enumerate}
\item[1.] $S^R(\nu)$ is a well ordered subset of the positive part of the value group $\Gamma$ of
$\nu$ of ordinal type at most $\omega^h$, where $\omega$ is the ordinal type of the well ordered
set $\NN$, and $h$ is the rank of $\nu$. \item[2.] The rational rank of $\nu$ plus the
transcendence degree of $V/m_V$ over $R/m_R$ is less than or equal to the dimension of $R$.
\end{enumerate}
The second condition is the Abhyankar inequality \cite{Ab}.

The only semigroups which are realized by a valuation on a one dimensional regular local ring are
isomorphic to the natural numbers. The semigroups which are realized by a valuation on a regular
local ring of dimension 2 with algebraically closed residue field are much more complicated, but
are completely classified by Spivakovsky in \cite{S}. A different proof is given by Favre and
Jonsson in \cite{FJ}, and the theorem is formulated in the context of semigroups by Cutkosky and
Teissier \cite{CT}.

In \cite{CT}, Teissier and the first author give some examples showing that some surprising semigroups of
rank $>1$ can occur as   semigroups of valuations on noetherian domains, and raise the general
questions of finding new constraints on value semigroups and classifying semigroups which occur as
value semigroups. 

In this paper, we consider semigroups of rank 1 valuations. We show in Theorem
\ref{Theorem1} that the Hilbert polynomial of $R$ gives a bound on the growth
of the valuation semigroup $S^R(\nu)$. This allows us to give (in Corollary
\ref{Corollary3}) a very simple example of a well ordered subsemigroup of $\QQ_+$ of ordinal type
$\omega$, which is not a value semigroup of a local domain. This shows that the above conditions 1
and 2 do not characterize value semigroups on  local domains.

The simple bound of Theorem \ref{Theorem1} of this paper is extended 
in the article \cite{CT2} of Teissier and the first author to give
a very general bound on the growth of a value semigroup of arbitrary rank,
from which a rough description of the (extremely bizzare) shape of a higher rank
valuation semigroup is derived.

Prior to this paper and \cite{CT2}, no other general constraints were known on the value  semigroups
$S^R(\nu)$. In fact, it was even unknown if the above conditions 1 and 2 characterize value
semigroups.

With our restriction that $\nu$ has rank 1, we can assume that $S^R(\nu)$ is embedded in $\RR_+$. We can further assume that $s_0=1$ is the smallest element of $S^R(\nu)$. For $n\in\NN$, let
$$
\phi(n)=|S^R(\nu)\cap(0,n)|.
$$ Corollary \ref{Corollary1} of Theorem \ref{Theorem1} shows that for $n\gg0$, $$
\phi(n)<P_R(n),
$$
 where $P_R(n)$ is the Hilbert polynomial of $R$. This bound is the best possible for a one dimensional
local domain, as we show after Corollary \ref{Corollary3}. However, this bound
is far from being sharp for $R$ of dimension larger than one.
Let
$$
\P_n=\{f\in R\mid \nu(f)\ge n\},
$$
an ideal in $R$ which contains $m_R^n$. Suppose that $R$ contains a field $k$
isomorphic to $R/m_R$,
and $R/m_R\cong V/m_V$. Then for $n\gg 0$,
$$
\phi(n)=P_R(n)-\ell(\P_n/m_R^n),
$$
where $\ell(\P_n/m_R^n)$ is the length of $\P_n/m^n$. We approximate
$\ell(\P_n/m_R^n)$  to show in Corollary \ref{CorN1} that 
$$
\mbox{lim sup }\frac{\phi(n)}{n^d}<\frac{e(R)}{d!} =\lim_{n\rightarrow\infty}\frac{P_R(n)}{n^d}
$$
whenever $R$ has dimension $d\ge 2$, where $e(R)$ is the multiplicity of $R$.
When the dimension $d$ of $R$ is greater than 1, this is significantly smaller than the upper bound given by the Hilbert polynomial
$P_R(n)$ of $R$.

In Section \ref{Growth}, we consider the rates of growth which are possible for
the function $\phi(n)$. We show that quite exotic behavior can occur, giving
 examples (Examples \ref{EXF1}, \ref{Ex1} and \ref{Exbiggrowth}) of valuations $\nu$ dominating a regular local ring $R$ of dimension two which have growth rates $n^{\alpha}$ for any
$\alpha\in\QQ$ with $1\le \alpha\le 2$. We also give an example of $n\mbox{log }n$ growth in Example \ref{Ex2}.

In the final section, we consider the general question of characterizing
rank 1 value semigroups, and ask if the necessary condition on a well ordered subsemigroup $S$ of $\RR_+$ that the growth of $|S\cap (0,n)|$ is bounded above by a polynomial in $n$ is sufficient for $S$ to be a valuation semigroup.

\section{Notation}\label{Not} 
The following conventions will hold throughout this paper.

If $G$ is a totally ordered abelian group, then $G_+$ will denote the positive elements
of $G$. $G_{\ge 0}$ will denote the non negative elements. If $a,b\in G$, we set
$$
(a,b)=\{x\in G\mid a<x<b\}.
$$

$(R,m_R)$ will denote a (Noetherian) local ring with maximal ideal $m_R$, and   $\ell(N)$ will denote the length of an $R$ module $N$.
Let
$$
P_R(n)=\frac{e(R)}{d!}n^d+\mbox{ lower order terms}
$$
be the Hilbert Samuel polynomial of $R$, where $d$ is the dimension of $R$ and $e(R)$ is the multiplicity of $R$. We have that
$\ell(R/m_R^n)=P_R(n)$ for $n\gg0$.

 Suppose that $R$ is a local domain with quotient field $K$, and 
$\nu$ is a valuation of $K$
with valuation ring $(V,m_V)$.  We will say that $\nu$ dominates $R$ if  $R\subset V$ and $m_V\cap
R=m_R$. We define the value semigroup of $\nu$ on $R$ to be
$$
S^R(\nu)=\nu(m_R-\{0\}).
$$
Let $\Gamma$ be the valuation group of $\nu$. For $\lambda\in\Gamma$,
we define ideals in $R$
$$
\P_{\lambda}=\{f\in R\mid \nu(f)\ge\lambda\}.
$$

\section{bounds for growth of Semigroups of rank 1 valuations}\label{Bound}

The bounds in this section are valid for valuations of arbitrary rank, but 
since they give information about the smallest segment of the value group,
they are essentially statements about rank 1 valuations. We use here, and in
the remainder of this paper the notation introduced above in Section \ref{Not}.

\begin{Theorem}\label{Theorem1} Suppose that $R$ is a  local domain which is dominated by a valuation
$\nu$, and suppose that $s_0$ is the smallest element of $S^R(\nu)$. Then
$$
|S^R(\nu)\cap (0,ns_0)|<\ell(R/m_R^n)
$$
for all $n\in\NN$.
\end{Theorem}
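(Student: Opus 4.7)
The plan is to convert the count $N := |S^R(\nu) \cap (0, ns_0)|$ into the length of a strictly descending chain of ideals of $R$ between $R$ and $m_R^n$, which in turn will bound $\ell(R/m_R^n)$ from below. Enumerate $S^R(\nu) \cap (0, ns_0]$ in increasing order as $t_1 < t_2 < \cdots < t_N < t_{N+1}$, where $t_{N+1} = ns_0$; note that $ns_0 \in S^R(\nu)$ because $\nu(f) = s_0$ for some $f \in m_R$ gives $\nu(f^n) = ns_0$ with $f^n \in m_R$. Set $t_0 = 0$ and, for each $\lambda \in \Gamma_{\ge 0}$, define the ideal
$$J_\lambda = \P_\lambda + m_R^n \subseteq R.$$
I would then form the chain
$$R = J_{t_0} \supseteq J_{t_1} \supseteq \cdots \supseteq J_{t_N} \supseteq J_{t_{N+1}} \supseteq m_R^n,$$
where the last containment holds because every product of $n$ elements of $m_R$ has value at least $ns_0$, so $m_R^n \subseteq \P_{ns_0}$.

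The main step is to show that each successive inclusion $J_{t_j} \supseteq J_{t_{j+1}}$, for $j = 0, 1, \ldots, N$, is strict. To this end I would pick $f \in R$ with $\nu(f) = t_j$ (using $f = 1$ when $j = 0$), and assume for contradiction that $f \in J_{t_{j+1}}$, so $f = g + h$ with $g \in \P_{t_{j+1}}$ and $h \in m_R^n$. Then $\nu(g) \ge t_{j+1} > t_j$ and $\nu(h) \ge ns_0 > t_j$, where the latter uses $t_j < ns_0$ (valid for each $j = 0, \ldots, N$). The standard non-archimedean estimate then gives
$$\nu(f) = \nu(g+h) \ge \min(\nu(g), \nu(h)) > t_j,$$
contradicting $\nu(f) = t_j$.

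The chain thereby exhibits $N+1$ strict inclusions of submodules of $R/m_R^n$, so $\ell(R/m_R^n) \ge N + 1 > N$, which is the desired inequality. The only tactical subtlety is the inclusion of the extra top index $t_{N+1} = ns_0$: without it the chain would supply only $N$ strict inclusions and yield the weak inequality $\ell(R/m_R^n) \ge N$, whereas adjoining $ns_0$ --- legitimate precisely because $ns_0$ still belongs to $S^R(\nu)$ --- produces the strict bound asserted by the theorem. Beyond this small observation no real obstacle stands in the way.
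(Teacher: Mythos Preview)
Your proof is correct and follows essentially the same route as the paper: both arguments enumerate the values in $(0,ns_0)$, append $ns_0$ at the end, and use the resulting strict chain of valuation ideals $\P_\lambda$ between $R$ (or $m_R$) and $m_R^n$ to bound the count by $\ell(R/m_R^n)$. One small simplification: since $m_R^n\subseteq \P_{ns_0}\subseteq \P_{t_j}$ for every $t_j\le ns_0$, your ideals $J_{t_j}=\P_{t_j}+m_R^n$ coincide with $\P_{t_j}$, so the ``$+\,m_R^n$'' is unnecessary; the paper works directly with the $\P_\lambda$.
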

\begin{proof}
Suppose that $n\in\NN$. Since $S^R(\nu)$ is well ordered, $(0,ns_0)\cap S^R(\nu)$ is a finite set
$$
\lambda_1<\cdots<\lambda_r
$$
for some $r\in\NN$. Set $\lambda_{r+1}=ns_0$. We have a sequence of inclusions of ideals (as defined in Section \ref{Not})
\begin{equation}\label{eq3}
m_R^n\subset \P_{ns_0}=\P_{\lambda_{r+1}}\subset\P_{\lambda_r}\subset\cdots\subset \P_{\lambda_1}=m_R.
\end{equation}
Thus
\begin{equation}\label{eq20}
\sum_{i=1}^r\ell(\P_{\lambda_i}/\P_{\lambda_{i+1}})\le\ell(m_R/m_R^n).
\end{equation}
Since $\ell(\P_{\lambda_i}/\P_{\lambda_{i+1}})>0$ for all $i$, we have the desired inequality.
\end{proof}

Recall that $P_R(n)$ is the Hilbert polynomial of a local ring $R$.

\begin{Corollary}\label{Corollary1} Suppose that $R$ is a  local domain of dimension $d$ which is dominated by a valuation
$\nu$, and  $s_0$ is the
smallest element of $S^R(\nu)$. Then
\begin{enumerate}
\item[1.] For all positive integers $n\gg 0$, $|S^R(\nu)\cap (0,ns_0)|<P_R(n)$. \item[2.] There exists $c>0$ such that $|S^R(\nu)\cap
(0,ns_0)|<cn^d$ for all $n\in \NN$.
\end{enumerate}
\end{Corollary}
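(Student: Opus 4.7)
The plan is to derive both statements directly from Theorem \ref{Theorem1}, using only standard facts about the Hilbert--Samuel function of a local ring.

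First I would write down the inequality furnished by Theorem \ref{Theorem1}:
$$
|S^R(\nu)\cap (0,ns_0)|<\ell(R/m_R^n)
$$
for every $n\in\NN$. For part (1), I would invoke the fact recalled in Section \ref{Not} that $\ell(R/m_R^n)=P_R(n)$ for all $n\gg 0$; substituting this equality into the bound from Theorem \ref{Theorem1} yields $|S^R(\nu)\cap (0,ns_0)|<P_R(n)$ for all sufficiently large $n$, which is exactly part (1).

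For part (2), I would observe that $P_R(n)$ is a polynomial of degree $d$ with leading coefficient $e(R)/d!>0$, so there is a constant $c_1>0$ with $P_R(n)\le c_1 n^d$ for every $n\ge 1$. Combining this with the equality $\ell(R/m_R^n)=P_R(n)$ valid for $n\ge N_0$ (some threshold) gives $\ell(R/m_R^n)\le c_1 n^d$ for all $n\ge N_0$. The remaining finitely many values $\ell(R/m_R^n)$ for $1\le n<N_0$ are bounded by some constant, so by enlarging $c_1$ to a suitable $c>0$ we obtain $\ell(R/m_R^n)\le cn^d$ for \emph{every} $n\in\NN$. Applying Theorem \ref{Theorem1} then yields $|S^R(\nu)\cap(0,ns_0)|<cn^d$ for all $n$, establishing (2).

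There is no real obstacle in this argument; the only minor care needed is in part (2), where one must absorb the finitely many small values of $n$ into the constant $c$ rather than relying only on the asymptotic behavior of the Hilbert polynomial. Both parts are essentially immediate packaging of the bound in Theorem \ref{Theorem1} through the standard asymptotics of $\ell(R/m_R^n)$.
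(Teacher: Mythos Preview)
Your proposal is correct and is exactly the intended argument: the paper states Corollary~\ref{Corollary1} without proof, treating both parts as immediate consequences of Theorem~\ref{Theorem1} together with the standard fact that $\ell(R/m_R^n)=P_R(n)$ for $n\gg 0$. Your handling of the finitely many small $n$ in part~(2) is the appropriate (and only) detail to check.
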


\begin{Corollary}\label{Corollary2}  Suppose that $R$ is a regular local ring of dimension $d$ which is dominated by a  valuation
$\nu$, and $s_0$ is
the smallest element of $S^R(\nu)$. Then
$$
|S^R(\nu)\cap (0,ns_0)|<\binom{d-1+n}{d}
$$
for all  $n\in \NN$.
\end{Corollary}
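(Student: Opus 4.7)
The plan is to derive this corollary as an immediate specialization of Theorem \ref{Theorem1}. That theorem already supplies the strict inequality $|S^R(\nu)\cap(0,ns_0)|<\ell(R/m_R^n)$ for every $n\in\NN$, so all that remains is to identify $\ell(R/m_R^n)$ in the regular case with the binomial coefficient $\binom{d-1+n}{d}$.

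The key step is therefore a standard length computation. Since $R$ is a regular local ring of dimension $d$, the associated graded ring $\gr_{m_R}(R)=\Dirsum_{i\ge 0}m_R^i/m_R^{i+1}$ is isomorphic to the polynomial ring $k[x_1,\dots,x_d]$, where $k=R/m_R$. Consequently, for every $i\ge 0$,
$$
\ell(m_R^i/m_R^{i+1})=\dim_k k[x_1,\dots,x_d]_i=\binom{d-1+i}{d-1}.
$$
Summing over $i=0,1,\dots,n-1$ and applying the hockey stick identity yields
$$
\ell(R/m_R^n)=\sum_{i=0}^{n-1}\binom{d-1+i}{d-1}=\binom{d-1+n}{d}
$$
for every $n\in\NN$. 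Note that, unlike in Corollary \ref{Corollary1}, there is no need to restrict to $n\gg 0$: the identity $\ell(R/m_R^n)=P_R(n)=\binom{d-1+n}{d}$ is valid for all $n\ge 0$ because $R$ is regular.

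Substituting this evaluation into the bound of Theorem \ref{Theorem1} gives $|S^R(\nu)\cap(0,ns_0)|<\binom{d-1+n}{d}$ for every $n\in\NN$, which is exactly the asserted statement. There is no substantive obstacle; the only point requiring a little care is to cite the well-known fact that $\gr_{m_R}(R)$ is a polynomial ring precisely when $R$ is regular, which is what makes the Hilbert function coincide with its Hilbert polynomial on the nose.
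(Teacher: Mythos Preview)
Your proof is correct and follows exactly the approach implicit in the paper: the corollary is stated there without proof, as an immediate consequence of Theorem~\ref{Theorem1} together with the standard fact that $\ell(R/m_R^n)=\binom{d-1+n}{d}$ for a regular local ring of dimension $d$. Your explicit computation of the Hilbert function via the associated graded ring simply spells out this well-known identity.
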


\begin{Corollary}\label{Corollary3} There exists a well ordered subsemigroup $U$ of $\QQ_+$ such that $U$ has ordinal
type $\omega$ and $U\ne S^R(\nu)$ for any valuation $\nu$ dominating a local domain $R$.
\end{Corollary}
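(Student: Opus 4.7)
The plan is to exploit Corollary \ref{Corollary1}(2), which says that for any valuation $\nu$ dominating a local domain $R$ of dimension $d$, the counting function $n \mapsto |S^R(\nu) \cap (0, ns_0)|$ is bounded above by $cn^d$ for some constant $c$. It therefore suffices to construct a well-ordered subsemigroup $U$ of $\QQ_+$ of ordinal type $\omega$ whose counting function $N \mapsto |U \cap (0,N)|$ grows faster than every polynomial.

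Explicitly, I would take
$$
U = \left\{n + \frac{k}{n!} : n \in \NN,\ n \geq 1,\ 0 \leq k \leq n! - 1\right\}.
$$
The $n$-th block $U_n = \{n + k/n! : 0 \leq k < n!\}$ is a finite subset of $[n, n+1)$ of size $n!$, so $U = \bigsqcup_{n \geq 1} U_n$ with $\max U_n < \min U_{n+1}$; listing the blocks in succession exhibits $U$ as a well-ordered set of type $\omega$. The smallest element is $s_0 = 1$, and for integer $N \geq 2$,
$$
|U \cap (0, N)| = \sum_{n=1}^{N-1} n! \geq (N-1)!,
$$
which exceeds any polynomial in $N$ for $N$ large. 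Thus if $U$ were equal to $S^R(\nu)$ for some $\nu$ dominating a local domain $R$ of dimension $d$, Corollary \ref{Corollary1}(2) would force $(N-1)! < cN^d$ for every $N$, a contradiction.

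The main technical step is verifying that $U$ is closed under addition. For $u = n + a/n!$ and $v = m + b/m!$ in $U$ with $n \leq m$, I would rewrite
$$
u+v = (n+m) + \frac{c}{m!}, \qquad c = a \cdot \frac{m!}{n!} + b,
$$
with $c$ a nonnegative integer satisfying $c < 2\,m!$. If $c < m!$, the fractional part is strictly less than $1$ and equals $k/(n+m)!$ for the integer $k = c \cdot (n+m)!/m! < (n+m)!$, so $u+v \in U_{n+m}$. If instead $m! \leq c < 2m!$, a single carry places $u+v = (n+m+1) + (c - m!)/m!$ in $U_{n+m+1}$ by the analogous computation. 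The main obstacle in the argument is this case analysis with the carry; the bound $c < 2m!$ guarantees at most one carry, which is precisely what is needed for the sum to land in a single block of $U$. Once this is established, combining with the growth estimate and Corollary \ref{Corollary1}(2) completes the proof.
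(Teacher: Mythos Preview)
Your proof is correct and takes a genuinely different route from the paper. The paper avoids the explicit closure verification by simply \emph{generating} a semigroup: it picks any set $T\subset\QQ_+$ with smallest element $1$ and $n^n\le |T\cap(0,n)|<\infty$ for all $n$, and lets $U=\bigcup_{r\ge1}rT$ be the semigroup generated by $T$. Closure under addition is then free; one only needs to check that $|U\cap(0,r)|<\infty$ for each $r$ (since every element of $T$ is at least $1$, a sum of $j$ elements of $T$ lies in $[j,\infty)$, and $T\cap(0,r)$ is finite) and that $|U\cap(0,n)|\ge|T\cap(0,n)|\ge n^n$, which defeats every polynomial bound from Corollary~\ref{Corollary1}. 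Your construction instead writes down the specific semigroup $U=\bigcup_{n\ge1}\{n+k/n!:0\le k<n!\}$ and verifies closure directly via the divisibility $m!\mid(n+m)!$ together with a single-carry argument. The paper's method is shorter and more flexible (any sufficiently dense $T$ works, and one never has to touch the additive structure), while yours has the virtue of producing a fully explicit example with no auxiliary choices and makes the semigroup structure completely transparent.
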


\begin{proof} Take any subset $T$ of $\QQ_+$ such that 1 is the smallest element of $T$ and $n^n\le |T\cap
(0,n)|<\infty$ for all $n\in\NN$. For all positive integers $r$, let
$$
rT=\{a_1+\cdots+a_r\mid a_1,\ldots,a_r\in T\}.
$$
Let $U=\omega T=\cup_{r=1}^{\infty}rT$ be the semigroup generated by $T$. By our constraints,
$|U\cap(0,r)|<\infty$ for all $r\in\NN$. Thus $U$ is well ordered and has ordinal type $\omega$. By
2 of Corollary \ref{Corollary1}, $U$ cannot be the semigroup of a valuation dominating a local
domain.
\end{proof}

We will now consider more closely the bound
\begin{equation}\label{eq4}
|S^R(\nu)\cap (0,ns_0)|<P_R(n)
\end{equation}
for $n\gg 0$ of 1 of Corollary \ref{Corollary1}.

In the case when $R$ is a regular local ring of dimension 1, we have that
$$
|S^R(\nu)\cap (0,ns_0)|=n-1=P_R(n)
$$
for all $n\in\NN$, so that the bound (\ref{eq4}) is sharp.

When $R$ is an arbitrary local domain of dimension 1, the bound
(\ref{eq4}) can be far from sharp, as is shown by the following example.
Let $R$ be the localization of  $k[x,y]/y^2-x^2-x^3$ at the maximal ideal
$(x,y)$. Define a valuation $\nu$ which dominates $R$ by embedding
$R$ into the power series ring $k[[t]]$ by the $k$-algebra homomorphism
which maps $x$ to $t$ and $y$ to $t\sqrt{1+t}$. Let $\nu$ be the restriction
of the $t$-adic valuation to $R$.  Then $S^R(\nu)=\NN$ and $s_0=1$, and
$|S^R(\nu)\cap (0,ns_0)|=n-1$ for all positive $n$. 
But  $R$ has multiplicity 2, and $P_R(n)-1=2(n-1)$ for all positive $n$. 

However,  (\ref{eq4}) can be sharp for a one dimensional $R$ which is 
not regular, as is illustrated by the following example. Let $R$ be the localization of $k[x,y]/y^2-x^3$ at the maximal ideal $(x,y)$. Embed $R$ into
the valuation ring $V=k[t]_{(t)}$ by the $k$-algebra homomorphism which maps 
$x$ to $t^2$ and $y$ to $t^3$. Let $\nu$ be the restriction of  $\frac{1}{2}$ times the $t$-adic valuation on $V$ to $R$. Then 
$\nu(x)=1$, $\nu(y)=\frac{3}{2}$ and
$S^R(\nu)=\{1,\frac{3}{2},2,\frac{5}{2},3,\frac{7}{2},\ldots\}$.
Thus $s_0=1$ and 
$$
|S^R(\nu)\cap (0,ns_0)|=2(n-1)=P_R(n)-1
$$ 
for all positive $n$. 

Evidently, in the case of one dimensional domains, (\ref{eq4}) is the best bound which is always valid.

In rings of  dimension 2 and higher, (\ref{eq4}) is always far from sharp, as we show in the next section.

\section{A sharper bound}\label{Sharper}

In this section, we assume that $(R,m_R)$ is a 
local domain of dimension $d$ and multiplicity $e=e(R)$.  Suppose $\nu$ is a rank 1 valuation on the quotient field of $R$ 
with valuation ring $V$ such
that $\nu$ dominates $R$,
 and $R$ contains an infinite field $k$ such that $k\cong R/m_R$ with $R/m_R\cong V/m_V$.   Without loss of generality, we may assume that the smallest value of an
element of $m_R$ is 
$s_0 = 1$.\\  Let
$$
\phi(n)=|S^R(\nu)\cap(0,n)|
$$
for $n\in\NN$. We will measure the deviation of $\phi(n)$ from the
upper bound (\ref{eq4}) given by the
Hilbert-Samuel polynomial $P_R(n)$ of $R$.

We begin with another look at the proof of Theorem \ref{Theorem1} with these
assumptions on $R$. 
 Since  $k=V/m_V$,  we have
$$
\ell(\P_{\lambda_i}/\P_{\lambda_{i+1}})=1
$$
for all $i$ in the sequence (\ref{eq3}). For $n\in\NN$,
let 
$$
\psi(n)=\ell(\P_n/m^n).
$$
$\psi(n)$ measures the difference of $\psi(n)$ from the Hilbert-Samuel function as
$$
\phi(n)=\ell(R/m^n)-\psi(n)
$$
for all $n\in\NN$ (by (\ref{eq20})), and thus
$$
\phi(n)=P_R(n)-\psi(n)
$$
for $n\gg0$.

Let $A=\gr_m(R)$  be the associated graded ring of $R$ and
for nonzero $x\in m^i\setminus m^{i+1}$, let $\bar x$ denote the image of $x+m^{i+1}$ in $A$. We will call $\overline x$ the inital form of $x$ and $i$ the initial degree of $x$.
\\
\\

\begin{Lemma}\label{LemmaN1}
There exist $x_1,\ldots,x_d \in m \setminus m^2$ such that
$\bar{x_i}\in A$ form an algebraically independent set over $k$, and
$\nu(x_i) \neq \nu(x_j)$ for $i \neq j.$ 
\end{Lemma}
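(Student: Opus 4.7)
The plan has two parts: first, use graded Noether normalization to find $y_1, \ldots, y_d \in m \setminus m^2$ whose initial forms $\bar y_1, \ldots, \bar y_d \in A_1$ are algebraically independent over $k$, and second, modify these within their $k$-linear span to produce elements $x_1, \ldots, x_d$ with distinct $\nu$-values while preserving algebraic independence. For the first part, $A = \gr_m(R)$ is a finitely generated graded $k$-algebra generated in degree one, with $\dim A = \dim R = d$, and $k = A_0$ is infinite, so a standard prime-avoidance argument yields $\bar y_1, \ldots, \bar y_d \in A_1$ forming a homogeneous system of parameters. Graded Nakayama then implies that $A$ is a finite module over $k[\bar y_1, \ldots, \bar y_d]$, so this subring has Krull dimension $d$, forcing $\bar y_1, \ldots, \bar y_d$ to be algebraically independent over $k$. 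Lift each $\bar y_i$ to some $y_i \in m \setminus m^2$.

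For the second part, consider the $k$-subspace $W = k y_1 + \cdots + k y_d$ of $m$. Since $\bar y_1, \ldots, \bar y_d$ are linearly independent in $m/m^2$, $W$ is $d$-dimensional over $k$ and every nonzero element of $W$ lies in $m \setminus m^2$. Writing $\P_\alpha^+ = \{f \in R \mid \nu(f) > \alpha\}$, filter $W$ by the subspaces $W_\alpha = W \cap \P_\alpha$. For $\alpha \in S^R(\nu)$, the natural inclusion $W_\alpha / (W \cap \P_\alpha^+) \hookrightarrow \P_\alpha / \P_\alpha^+$ has target of $k$-dimension $1$, by the hypothesis $R/m_R \cong V/m_V$. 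Hence each nontrivial graded piece of the filtration of $W$ is $1$-dimensional over $k$, and since $\dim_k W = d$, jumps occur at exactly $d$ distinct values $\alpha_1 < \cdots < \alpha_d$ in $S^R(\nu)$. Choosing $x_j \in W_{\alpha_j} \setminus \P_{\alpha_j}^+$ for each $j$ produces a $k$-basis of $W$ with $\nu(x_j) = \alpha_j$, all distinct.

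It remains to check algebraic independence of $\bar x_1, \ldots, \bar x_d$ in $A$. Writing $x_j = \sum_i c_{ji} y_i$ for an invertible matrix $(c_{ji})$ over $k$, we have $\bar x_j = \sum_i c_{ji} \bar y_i$ in $A_1$, so the $k$-subalgebras $k[\bar x_1, \ldots, \bar x_d]$ and $k[\bar y_1, \ldots, \bar y_d]$ of $A$ coincide, and the $\bar x_j$ are also algebraically independent over $k$. The step requiring the most care is the first: producing a degree-one homogeneous system of parameters in $A$ genuinely uses $k$ being infinite and $A$ being generated over $k$ in degree one. The rest is linear algebra driven by the length-one hypothesis on $\P_\alpha / \P_\alpha^+$.
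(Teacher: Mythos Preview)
Your proof is correct and follows essentially the same approach as the paper: first obtain a degree-one Noether normalization $k[\bar y_1,\ldots,\bar y_d]\subset A$ (using that $k$ is infinite), then take $k$-linear combinations of the $y_i$ to arrange distinct $\nu$-values, using that $R/m_R\cong V/m_V$. The paper compresses your filtration argument into the single observation that $\nu(a)=\nu(b)$ implies there exists $\lambda\in k$ with $\nu(a+\lambda b)>\nu(a)$; your version makes explicit why this process yields a full $k$-basis of $W$ with distinct values and hence an invertible change of coordinates preserving algebraic independence of the initial forms.
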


\begin{proof} Since $k$ is infinite, $A$ has a Noether
Normalization in degree one. Let $y_1, \ldots, y_d$ be elements of $m
\setminus m^2$ such that $k[\bar y_1,\ldots, \bar y_d]$ is a Noether
Normalization of $A$.

Since $\nu(a)=\nu(b)$ implies the existence of $\lambda \in k$ such
that $\nu(a+\lambda b) > \nu(a)$, we can find $\lambda_{ij} \in k$
such that $x_i= \sum \lambda_{ij}y_j$ satisfy the desired properties.  
\end{proof}

\begin{Lemma}\label{LemmaN2}
Let $x_i$'s be as in the previous lemma, and let $K$ be the fraction field of $k[x_1,\ldots,x_d]$. Then there are elements $m_1,\ldots,m_e\in R$ (where the multiplicity of $R$ is $e$) such that $\bar{m_1}, \ldots, \bar{m_e} \in A$ are linearly independent over $K$.
\end{Lemma}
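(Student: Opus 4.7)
The plan is to interpret ``linearly independent over $K$'' as meaning that the images of $\bar{m_1},\ldots,\bar{m_e}$ in the $K$-vector space $A\otimes_P K$ are $K$-linearly independent, where $P=k[\bar{x_1},\ldots,\bar{x_d}]\subset A$ is canonically identified with $k[x_1,\ldots,x_d]$ via $x_i\mapsto\bar{x_i}$. The argument has three steps: (i) verify that $P$ is a Noether normalization of $A$; (ii) show that the generic rank of $A$ over $P$ equals the multiplicity $e$; and (iii) extract a $K$-basis of $A\otimes_P K$ consisting of homogeneous elements of $A$, then lift them to $R$.

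For step (i), I would revisit the proof of Lemma \ref{LemmaN1}: the $x_i$ were defined by $x_i=\sum_j\lambda_{ij}y_j$ where $k[\bar{y_1},\ldots,\bar{y_d}]$ is a Noether normalization of $A$. Since the initial forms $\bar{x_i}$ are algebraically independent, the matrix $(\lambda_{ij})$ must be invertible, and therefore $k[\bar{x_1},\ldots,\bar{x_d}]=k[\bar{y_1},\ldots,\bar{y_d}]$ is itself a Noether normalization, making $A$ a finitely generated graded $P$-module of Krull dimension $d$.

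For step (ii), I would compare Hilbert functions. From $\ell(R/m^{n+1})-\ell(R/m^n)=\dim_k A_n$ and $\ell(R/m^n)=(e/d!)n^d+\cdots$, the Hilbert function $n\mapsto\dim_k A_n$ agrees for large $n$ with a polynomial of degree $d-1$ and leading coefficient $e/(d-1)!$. On the other hand, the standard theory of multiplicity for a finitely generated graded module of maximal Krull dimension $d$ over the polynomial ring $P$ identifies this leading coefficient as $(\dim_K A\otimes_P K)/(d-1)!$, so $\dim_K(A\otimes_P K)=e$.

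For step (iii), I would choose a finite set of homogeneous $P$-module generators $\alpha_1,\ldots,\alpha_N$ of $A$. Their images span the $e$-dimensional $K$-vector space $A\otimes_P K$, so some subset of size $e$, say $\alpha_{i_1},\ldots,\alpha_{i_e}$, is a $K$-basis. Each $\alpha_{i_j}\in A_{n_{i_j}}=m^{n_{i_j}}/m^{n_{i_j}+1}$ lifts to some $m_j\in m^{n_{i_j}}$ with $\bar{m_j}=\alpha_{i_j}$, and the resulting $m_1,\ldots,m_e$ are the required elements. The main obstacle is the rank identification in step (ii): one must check carefully that torsion summands and degree shifts in a graded presentation of $A$ over $P$ contribute only to lower-order terms of the Hilbert polynomial, so that its leading coefficient truly records the generic rank of $A$ over $P$.
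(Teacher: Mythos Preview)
Your proposal is correct and follows essentially the same route as the paper: the paper also identifies $\dim_K(A\otimes_B K)$ with the multiplicity $e$ by comparing two computations of the leading coefficient of the Hilbert polynomial of $A$ as a graded $B$-module, and then lifts a $K$-basis of $A\otimes_B K$ of the form $\bar m_i\otimes 1$. Your step~(i) (checking that $(\lambda_{ij})$ is invertible so $P$ really is the Noether normalization) and your care in step~(iii) about choosing homogeneous lifts are details the paper leaves implicit, but the overall argument is the same.
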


\begin{proof} 

For $n\gg0$, $\ell(m^n/m^{n+1})$ is a polynomial $Q(n)$ of degree
$d-1$. We will compute the leading coefficient of $Q(n)$ in two ways.

Let $B=k[\overline x_1,\ldots,\overline x_n]$.
First, observe that $A$ is a finitely generated graded module over the standard graded ring $B$. Since $A$ has dimension $d$, and $B$ has multiplicty one,
we can compute from  tensoring a graded composition series of $A$ as a $B$  module with $K$ (or from the graded version of the additivity formula given for instance in Corollary 4.7.8 \cite{BH}) that 
the multiplicity of $A$ as a $B$ module is $\dim_K(A\otimes_{k[\bar{x_1},\ldots,\bar{x_d}]}K)$.

For $n\gg0$, $Q(n)=P_R(n+1)-P_R(n)$. Thus 
$$
Q(n)=\frac{e}{(d-1)!}n^{d-1}+\mbox{ lower order terms},
$$
and we conclude that 
$$\dim_K(A\otimes_{k[\bar{x_1},\ldots,\bar{x_d}]}K) = e.$$ 

Choose a basis for the vector space $A\otimes_{k[\bar{x_1},\ldots,\bar{x_d}]}K$
of elements of the form $\bar{m_i}\otimes1$. Such $m_i$'s have the desired property.
\end{proof}

\begin{Proposition}\label{PropN1}
Suppose the $x_i$'s and $m_i$'s are as in the previous lemmas. 
 Then the infimum limit
 \[
\mbox{\rm lim inf }\frac{\ell(\P_n/m^n)}{n^d}\ge
\frac{e}{d!}\left (1-\frac{1}{\nu(x_1)\ldots\nu(x_d)}\right).
\]
\end{Proposition}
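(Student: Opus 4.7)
The plan is to exhibit many $k$-linearly independent elements of $\P_n/m^n$ as products $x_1^{a_1}\cdots x_d^{a_d}m_j$, and to count them. Write $\nu_i=\nu(x_i)$ and let $n_j$ denote the initial degree of $m_j$, so $m_j\in m^{n_j}\setminus m^{n_j+1}$ and $\nu(m_j)\ge n_j$ (using $s_0=1$, which also forces each $\nu_i\ge 1$). Set
$$
\Lambda_n=\bigl\{(\mathbf a,j)\in\NN^d\times\{1,\ldots,e\}:\ \textstyle\sum_i a_i+n_j<n\text{ and }\sum_i a_i\nu_i+\nu(m_j)\ge n\bigr\}.
$$
For each $(\mathbf a,j)\in\Lambda_n$, the element $x^{\mathbf a}m_j$ lies in $\P_n$ by the second inequality and has initial form $\bar x^{\mathbf a}\bar m_j$ in degree $\sum_i a_i+n_j<n$ of $A$.

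The first step is to verify that $\{x^{\mathbf a}m_j:(\mathbf a,j)\in\Lambda_n\}$ is $k$-linearly independent modulo $m^n$. I would reduce this to the $k$-linear independence of the initial forms $\bar x^{\mathbf a}\bar m_j$ in $A$: a putative nonzero relation $\sum c_{\mathbf a,j}x^{\mathbf a}m_j\in m^n$ would have a smallest initial degree $D_0=\min\{\sum_i a_i+n_j:c_{\mathbf a,j}\ne 0\}<n$, and its graded component in $A_{D_0}$ would be $\sum_{\sum_i a_i+n_j=D_0}c_{\mathbf a,j}\bar x^{\mathbf a}\bar m_j$, which would have to vanish — a contradiction. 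The independence in $A$ combines Lemmas \ref{LemmaN1} and \ref{LemmaN2}: grouping by $j$, such a relation would read $\sum_j p_j(\bar x)\bar m_j=0$ in $A$ with $p_j\in k[\bar x_1,\ldots,\bar x_d]$; tensoring with $K$ over $k[\bar x_1,\ldots,\bar x_d]$ and invoking $K$-linear independence of $\{\bar m_j\otimes 1\}$ forces each $p_j=0$ in $K$, and algebraic independence of $\bar x_1,\ldots,\bar x_d$ then kills every coefficient $c_{\mathbf a,j}$.

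The second step is a lattice-point count. Since $\nu_i\ge 1$ and $\nu(m_j)\ge n_j$, the inequality $\sum_i a_i\nu_i+\nu(m_j)<n$ implies $\sum_i a_i+n_j<n$, so
$$
|\Lambda_n|=\#\bigl\{(\mathbf a,j):\textstyle\sum_i a_i+n_j<n\bigr\}-\#\bigl\{(\mathbf a,j):\textstyle\sum_i a_i\nu_i+\nu(m_j)<n\bigr\}.
$$
Using the standard asymptotic that the number of $\mathbf a\in\NN^d$ with $\sum_i a_i\nu_i<t$ equals $\frac{t^d}{d!\,\nu_1\cdots\nu_d}+O(t^{d-1})$, and the analogous statement with all $\nu_i=1$, and then summing over $j=1,\ldots,e$ (the shifts by $n_j$ and $\nu(m_j)$ affect only lower-order terms), one obtains
$$
|\Lambda_n|=\frac{e\,n^d}{d!}\left(1-\frac{1}{\nu_1\cdots\nu_d}\right)+O(n^{d-1}).
$$
Since $\ell(\P_n/m^n)\ge|\Lambda_n|$ by the first step, dividing by $n^d$ and taking $\liminf$ gives the claim.

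The main obstacle I expect is executing the linear-independence step cleanly — pinning down that the lowest-initial-degree component of a hypothetical relation is really recovered from just the $\bar x^{\mathbf a}\bar m_j$ with $\sum_i a_i+n_j=D_0$ (no higher-degree terms contaminate $A_{D_0}$), and confirming that the passage from a relation in $A$ to vanishing in $K$ has no torsion subtleties. Once the independence in $A$ is in hand, the counting is routine Ehrhart-type asymptotics.
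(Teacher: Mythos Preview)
Your proposal is correct and follows essentially the same approach as the paper: exhibit the monomials $x^{\mathbf a}m_j$ with $\sum a_i+n_j<n$, prove their $k$-linear independence modulo $m^n$ by passing to initial forms in $A$ and invoking Lemmas~\ref{LemmaN1}--\ref{LemmaN2}, then count those lying in $\P_n$ via Ehrhart-type asymptotics for the relevant simplices. The only cosmetic differences are that the paper bounds $|\Lambda_n|$ from below by replacing $\nu(m_j)$ with $\alpha_j=n_j$ in the valuation inequality before counting, whereas you compute $|\Lambda_n|$ directly as a set difference using $B\subseteq A$; both yield the same leading term.
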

\begin{proof} Let $\alpha_i$ be the initial degree of $m_i$.
Let 
$$
S=\{m_ix_1^{n_1}\cdots x_d^{nd}\mid \alpha_i+n_1+\cdots+n_d<n\}.
$$
We will first show that the classes of the elements of $S$ in $\P_n/m^n$
are linearly independent over $k$. 
Suppose otherwise. Then there is a nontrivial sum
\begin{equation}\label{eqN1}
\sum\lambda_{i,n_1,\ldots,n_d}m_ix_1^{n_1}\cdots x_d^{n_d}\in m^n,
\end{equation}
where $\alpha_i+n_1+\cdots+n_d<n$ and $0\ne \lambda_{i,n_1,\ldots,n_d}\in k$
for all terms in the sum.  Let $\tau$
be the smallest value of $\alpha_i+n_1+\cdots+n_d$ for a term appearing in 
(\ref{eqN1}).

Since $\tau<n$, we have that 
$$
\sum_{\alpha_i+n_1+\cdots+n_d=\tau}\lambda_{i,n_1,\ldots,n_d}m_ix_1^{n_1}\cdots x_d^{n_d}\in m^{\tau+1},
$$
and thus 
$$
\sum_{\alpha_i+n_1+\cdots+n_d=\tau}\lambda_{i,n_1,\ldots,n_d}\overline m_i\overline x_1^{n_1}\cdots \overline x_d^{n_d}=0
$$
in $m^{\tau}/m^{\tau+1}\subset A$.
But by Lemma \ref{LemmaN2}, the elements $\overline m_i\overline x_1^{n_1}\cdots \overline x_d^{n_d}$ are linearly independent over $k$ in $A$, which is a contradiction.

 Our next goal is to determine which of the elements of $S$ are in $\P_n\setminus m^n$.  Note that since the classes of these elements in
$\P_n/m^n$ are linearly independent over $k$, their number gives a lower bound on $\ell(\P_n/m^n)$.

For a fixed $i$, the condition that an element  $m_ix_1^{n_1}\ldots x_d^{n_d}$ is in $\P_n \setminus m^n$ can be written as the following system of linear inequalities in terms of $n_i$'s:
\begin{eqnarray*}
\nu(m_i)+\nu(x_1)n_1+\cdots+\nu(x_d)n_d &\geq &n \\
\alpha_i+ n_1+\cdots + n_d &<& n.
\end{eqnarray*}
Now since $\alpha_i \leq \nu(m_i)$ every solution to the following two inequalities is also a solution to the above system.
\begin{equation}\label{eqN2}
\begin{array}{lll}
\nu(x_1)n_1+\cdots+\nu(x_d)n_d &\geq &n- \alpha_i \\
n_1+\cdots + n_d &<& n-\alpha_i.
\end{array}
\end{equation}

We will now make an asymptotic approximation of the number of integral solutions
to the system (\ref{eqN2}). To a polytope $P\subset \RR^d$ and $n\in\ZZ_+$, we associate the Ehrhart function 
$$
E(p,n)=|\{z\in\ZZ^d\mid\frac{z}{n}\in P\}|.
$$
By approximating $P$ with $d$-cubes of small volume, we compute the volume of $P$
as
$$
\mbox{vol}(P)=\lim_{n\rightarrow\infty}\frac{E(P,n)}{n^d}=\lim_{n\rightarrow \infty}
\frac{|\{z\in\ZZ^d\mid z\in nP\}|}{n^d}.
$$
The volume of the $d$-simplex $\Delta$ with vertices at the origin and at distance $c_1,\ldots,c_d$ along the coordinate axes is
$$
\mbox{vol}(\Delta)=\frac{1}{d!}c_1\cdots c_d.
$$
Let $\sigma(n)$ be the set of integral solutions to the system (\ref{eqN2}).
We have that 
$$
\begin{array}{lll}
\mbox{lim inf }\frac{\ell(\P_n/m^n)}{n^d}&\ge& \lim_{n\rightarrow \infty}\frac{\sigma(n)}{n^d}\\
&=&\lim_{n\rightarrow \infty}\frac{|\{(n_1,\ldots,n_d)\in\NN^d\mid n_1+\cdots+n_d<n\}|}{n^d}\\
&&-\lim_{n\rightarrow\infty}\frac{
\{(n_1,\ldots,n_d)\in\NN^d\mid \nu(x_1)n_1+\cdots+\nu(x_d)n_d<n\}|}{n^d}\\
&=&\frac{1}{d!}(1-\frac{1}{\nu(x_1)\cdots\nu(x_d)}).
\end{array}
$$

\end{proof}

\begin{Corollary}\label{CorN1} Let assumptions be as introduced in the
beginning of this section. If the first $d$ elements in $S^R(\nu)$ are $1=s_1,s_2,\ldots, s_d,$ then the supremum limit 
\[
\mbox{\rm lim sup }\frac{\phi(n)}{n^d}<
\frac{e}{d!}\frac{1}{s_1\ldots s_d},\]
and thus, if $d>1$,
$$
\mbox{\rm lim sup }\frac{\phi(n)}{n^d}<
\frac{e}{d!}=\lim_{n\rightarrow\infty}\frac{P_R(n)}{n^d}.
$$

\end{Corollary}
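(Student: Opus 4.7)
The proof rests on three facts already at hand: Proposition \ref{PropN1}, the identity $\phi(n) = P_R(n) - \psi(n)$ for $n \gg 0$ established at the start of Section \ref{Sharper}, and the standard $\lim_{n \to \infty} P_R(n)/n^d = e/d!$.

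First I would apply Lemma \ref{LemmaN1} to produce $x_1, \ldots, x_d \in m_R \setminus m_R^2$ with algebraically independent initial forms and pairwise distinct valuations; after reindexing, assume $\nu(x_1) < \cdots < \nu(x_d)$. The central step, and really the only new content beyond Proposition \ref{PropN1}, is a short pigeonhole argument: the $d$ distinct values $\nu(x_1), \ldots, \nu(x_d)$ lie in the well-ordered set $S^R(\nu)$, whose $d$ smallest elements by hypothesis are $s_1 < \cdots < s_d$. Therefore $\nu(x_i) \ge s_i$ for every $i$, and consequently $\nu(x_1) \cdots \nu(x_d) \ge s_1 \cdots s_d$. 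I expect no serious obstacle at this step; it is immediate once one notices that Lemma \ref{LemmaN1} forces the $\nu(x_i)$ to be $d$ distinct elements of the semigroup.

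Feeding the resulting bound into Proposition \ref{PropN1} gives
$$
\liminf_{n \to \infty} \frac{\psi(n)}{n^d} \ge \frac{e}{d!}\left(1 - \frac{1}{\nu(x_1) \cdots \nu(x_d)}\right) \ge \frac{e}{d!}\left(1 - \frac{1}{s_1 \cdots s_d}\right).
$$
Using $\phi(n) = P_R(n) - \psi(n)$ together with $\limsup(a_n - b_n) \le \lim a_n - \liminf b_n$, this yields
$$
\limsup_{n \to \infty} \frac{\phi(n)}{n^d} \le \frac{e}{d!} - \liminf_{n \to \infty} \frac{\psi(n)}{n^d} \le \frac{e}{d!} \cdot \frac{1}{s_1 \cdots s_d},
$$
which is the first inequality of the statement. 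The second is then an immediate consequence: when $d > 1$ we have $s_2 > s_1 = 1$, so $s_1 \cdots s_d \ge s_2 > 1$, and hence $\frac{e}{d!} \cdot \frac{1}{s_1 \cdots s_d} < \frac{e}{d!} = \lim_{n \to \infty} P_R(n)/n^d$.
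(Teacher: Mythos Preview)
Your argument is correct and follows the same route as the paper's proof: invoke Proposition \ref{PropN1} for the $x_i$ produced by Lemma \ref{LemmaN1}, then use the pigeonhole observation that $d$ distinct semigroup values must dominate the first $d$ elements $s_1,\ldots,s_d$, giving $\nu(x_1)\cdots\nu(x_d)\ge s_1\cdots s_d$. You are simply more explicit than the paper about passing from the $\psi$-bound to the $\phi$-bound via $\phi(n)=P_R(n)-\psi(n)$ and the $\limsup$/$\liminf$ arithmetic; the paper compresses this into the phrase ``asymptotically smaller than.'' One small remark: your chain of inequalities yields $\limsup \phi(n)/n^d \le \frac{e}{d!}\frac{1}{s_1\cdots s_d}$ with a non-strict inequality, whereas the statement claims strict $<$; the paper's own proof has the same feature, so this is not a defect in your argument relative to the paper.
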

\begin{proof} From the proposition it follows that there are elements $x_1, \ldots x_d \in m$ such that $\nu(x_i) \neq \nu(x_j)$ and the number of elements in  $S^R(\nu) \cap (0,n)$ is asymptotically smaller than 
\[\frac{e}{d!}\frac{1}{\nu(x_1)\ldots \nu(x_d)}n^d.\]
Since $x_i$'s have distinct values, we have 
$s_1\ldots s_d \leq \nu(x_1)\ldots \nu(x_d)$.  \end{proof}

\section{The rate of  growth of value semigroups}\label{Growth}

In this section, we study the rate of  growth of $\phi(n)=|S^R(\nu)\cap(0,n)|$
when $R$ is a regular local ring of dimension two. We show that a wide range
of interesting growth occurs within the possible ranges of $n$ and $n^2$.

We say that $\phi(n)$ has the growth rate of the function $f(n)$ if there 
exist $0<a\le b$ such that $af(n)\le\phi(n)\le bf(n)$ for all $n\gg0$.

We can easily achieve growth of $\phi(n)=|S^R(\nu)\cap(0,n)|$ of the rate   $n^d$ on a regular local ring $R$ of
dimension $d$. Choose $d$ rationally independent real positive numbers
$\gamma_1,\ldots,\gamma_d$, a regular system of parameters $x_1,\ldots, x_d$ of $R$ and prescribe
that $\nu(x_i)=\gamma_i$ for all $i$. It is also possible to achieve
growth asymptotic to  $n^d$ from a rational rank 1 valuation, as we show
in Example \ref{Exbiggrowth}. We also give examples in this section showing
that a wide range of interesting growth can be occured.

We use the following characterization of value semigroups dominating
a regular local ring of dimension two of \cite{S}, and as may also be found
with a different treatment in \cite{FJ}.  We state the characterization in the notation of \cite{CT}.

\begin{Proposition}\label{planesemigr} 
Let $S$ be a well ordered subsemigroup of $\QQ_+$ which is not isomorphic to $\NN$ and whose minimal system of generators $(1,a_1,\ldots ,a_i,\dots)$ is of ordinal type $\leq\omega$.
Let  $S_i$ denote the semigroup generated by $1,a_1,\ldots ,a_i$, and $G_i$  the subgroup of $\QQ$ which it generates. Let
$S_0=\NN_+$ and $G_0=\ZZ$. Set  $q_i=[G_i\colon G_{i-1}]$ for $i\geq 1$. Let $s_i$ be the smallest positive integer $s$ such that $sa_i\in S_{i-1}$.
Then $S$ is the semigroup $S^R(\nu)$ of a valuation $\nu$ dominating a regular local ring $R$ of dimension 2 with algebraically closed residue field if and only if
$$
 \mbox{ for each $i\geq 1$ we have  $s_i=q_i$ and $a_{i+1}>q_ia_i$.}
$$
\end{Proposition}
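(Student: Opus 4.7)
The plan is to work with generating sequences (equivalently, key polynomials in the sense of MacLane) inside the $m$-adic completion $\hat R$ of a two-dimensional regular local ring $R$. When $R$ has algebraically closed residue field $k$, then $\hat R \cong k[[x,y]]$, and a rank~1 valuation $\nu$ dominating $R$ extends uniquely to $\hat R$ with the same value semigroup. The overall strategy is to establish a correspondence between valuations dominating $R$ (up to equivalence) and sequences $(P_0,P_1,P_2,\ldots)$ of elements of $\hat R$ whose associated combinatorial invariants match exactly the hypotheses $s_i=q_i$ and $a_{i+1}>q_ia_i$.

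For the necessity direction, assume $S=S^R(\nu)$ and normalize so that $s_0=1$. Pick $P_0\in m_R$ with $\nu(P_0)=1$ and $P_1\in m_R$ with $\nu(P_1)=a_1$ (extending to a regular system of parameters), then construct $P_{i+1}$ inductively. Given $P_0,\ldots,P_i$ with $\nu(P_j)=a_j$, the definition $s_ia_i\in S_{i-1}$ gives an expression $s_ia_i=\sum_{j<i}n_ja_j$ in $\NN$-linear combinations, so $P_i^{s_i}/\prod P_j^{n_j}\in V$ has a nonzero residue $c\in V/m_V=k$ (here algebraic closure is essential), and I set $P_{i+1}:=P_i^{s_i}-c\prod P_j^{n_j}$, getting $a_{i+1}:=\nu(P_{i+1})>s_ia_i$. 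The equality $s_i=q_i$ is then extracted by comparing lattices: on the one hand $s_ia_i\in G_{i-1}$ forces $q_i\mid s_i$; on the other, one proves that the $P_j$ actually generate every valuation ideal $\P_\lambda$ of $R$ for $\lambda\le a_{i+1}$, which forces $q_ia_i\in S_{i-1}$ and hence $s_i\le q_i$. The strict inequality $a_{i+1}>q_ia_i$ then reads off from the construction of $P_{i+1}$.

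For the sufficiency direction, given an abstract $S$ satisfying the hypotheses, I would build the valuation concretely on $\hat R=k[[x,y]]$. Set $P_0=x$, $P_1=y$, and inductively $P_{i+1}=P_i^{q_i}-c_i\prod_{j<i}P_j^{n_{i,j}}$, where the exponents come from a chosen representation $q_ia_i=\sum n_{i,j}a_j$ in $S_{i-1}$ and the $c_i\in k^*$ are chosen generically. Every $f\in\hat R$ admits a unique ``Puiseux-type'' expansion $f=\sum_\alpha c_\alpha \prod_j P_j^{\alpha_j}$ with $0\le\alpha_j<q_j$ for $j\ge1$, obtained by iteratively using the relations $P_i^{q_i}\equiv c_i\prod P_j^{n_{i,j}}\pmod{\text{higher value}}$ to reduce exponents. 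Define $\nu(f)$ as the minimum of $\sum\alpha_ja_j$ over nonzero terms of the expansion. The hypothesis $a_{i+1}>q_ia_i$ guarantees that distinct admissible monomials have distinct values, so the minimum is unambiguous; the hypothesis $s_i=q_i$ guarantees that the sequence of reductions terminates correctly and that the value semigroup is exactly $S$.

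The main obstacle is the sufficiency direction, and specifically the verification that $\nu$ as defined really is a valuation, i.e.\ that $\nu(fg)=\nu(f)+\nu(g)$ rather than $\ge$. Equivalently, one must show that the unique minimal-value term in the reduced expansion of a product $fg$ does not cancel with any higher-degree terms after reducing $P_i^{q_i}$. This is precisely what the strict inequalities $a_{i+1}>q_ia_i$ are designed to prevent, but making this rigorous requires a careful double induction on the indices of the key polynomials involved, controlling the filtration $\{\P_\lambda\}$ level by level and checking that the graded pieces $\P_\lambda/\P_{>\lambda}$ are one-dimensional over $k$. Once this structural lemma is established, both directions of the proposition follow.
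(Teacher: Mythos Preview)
The paper does not actually prove this proposition: it is quoted from Spivakovsky~\cite{S} (with an alternative treatment in Favre--Jonsson~\cite{FJ}), restated in the notation of~\cite{CT}, and then used as a black box to build the examples in Section~\ref{Growth}. So there is no ``paper's own proof'' to compare against.

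That said, your outline is precisely the generating-sequence (key-polynomial) approach of those references. Both directions are organized around the same inductive construction of the $P_i$ and the same structural lemma---that the admissible monomials $\prod_j P_j^{\alpha_j}$ with $0\le\alpha_j<q_j$ give a $k$-basis for the graded pieces $\P_\lambda/\P_{>\lambda}$ of the valuation filtration---and you have correctly located the multiplicativity check $\nu(fg)=\nu(f)+\nu(g)$ as the heart of the sufficiency argument. This matches Spivakovsky's argument in substance.

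One caution: your opening claim that a rank~1 valuation on $R$ ``extends uniquely to $\hat R$ with the same value semigroup'' is more delicate than you suggest (this is exactly the subject of Heinzer--Sally~\cite{HS}, already in the paper's bibliography), and in any case it is unnecessary. For sufficiency, the $P_i$ you construct are polynomials in $x,y$, so the valuation is defined directly on $R=k[x,y]_{(x,y)}$ without any detour through the completion---indeed this is how the paper itself presents the examples that follow. For necessity you already work in $R$ throughout. Removing the passage through $\hat R$ would tighten the write-up without altering the argument.
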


We need the following two statements to estimate the number of terms of a rational rank 1 semigroup $S\subset \RR$, which are contained in a fixed interval of length 1.  Our notation is $\NN=\ZZ_{\ge 0}$.

\begin{Lemma}\label{density}
Suppose that $a_1,\dots,a_k$ are positive rational numbers. Let
$G_0=\ZZ$. For a fixed $1\le i\le k$ let $S_i$ be the subsemigroup
of $\QQ_{\ge 0}$ generated by $1,a_1,\dots,a_i$ and $G_i$ be the
group $S_i+(-S_i)$. Suppose that $q_i=[G_i\colon G_{i-1}]$ and $x_i=(q_1-1)a_1+\dots+(q_i-1)a_i$.

Then for all integers $1\le i\le k$  we have $|S_i\cap(x_i-1,x_i]|\ge q_1\cdots q_i$. Moreover, for all integers $1\le i\le k$ and real numbers  $x>x_i$  we have $S_i\cap[x-1,\infty)=G_i\cap[x-1,\infty)$ and $|S_i\cap[x-1,x)|=q_1\cdots q_i$.
\end{Lemma}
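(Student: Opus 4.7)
The plan is to induct on $i$ after making the structure of $G_i$ explicit. Each $G_i$ is a finitely generated subgroup of $\QQ$, hence cyclic: writing $G_i = g_i\ZZ$ with $g_i>0$, the index relation $[G_i:G_{i-1}] = q_i$ together with $g_0 = 1$ forces $g_i = 1/(q_1\cdots q_i)$. In particular, any half-open interval of length one in $\RR$ meets $G_i$ in exactly $q_1\cdots q_i$ points, so --- via the obvious inclusion $S_i\subseteq G_i$ --- both assertions reduce to showing that every element of $G_i$ in the relevant interval already belongs to $S_i$. The tool for this is a normal form: every $y\in G_i$ can be written as $y = g + n a_i$ with $g\in G_{i-1}$ and $0\le n\le q_i - 1$, obtained from $G_i = G_{i-1} + \ZZ a_i$ by reducing the coefficient of $a_i$ modulo $q_i$ (legal because $q_ia_i\in G_{i-1}$).

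I would prove the second assertion by induction on $i$, with trivial base $i=0$ (where $x_0 = 0$, $S_0 = \NN$, $G_0 = \ZZ$). For the inductive step, fix $x > x_i$ and $y\in G_i$ with $y\ge x-1$, and write $y = g + n a_i$ in normal form. Then
\[
g = y - n a_i \ge (x-1) - (q_i-1)a_i > x_i - 1 - (q_i-1)a_i = x_{i-1} - 1,
\]
using $x_i = x_{i-1} + (q_i-1)a_i$. The strict inequality lets me pick a real $x' > x_{i-1}$ with $g\ge x'-1$, and then the inductive hypothesis yields $g\in S_{i-1}$; hence $y = g + na_i\in S_i$. This establishes $G_i\cap[x-1,\infty)\subseteq S_i$, and the counting identity $|S_i\cap[x-1,x)| = q_1\cdots q_i$ then follows from the density statement about $G_i$.

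The first assertion comes from the same normal-form argument applied to $y\in G_i\cap(x_i-1,x_i]$: the bound $g > x_{i-1} - 1$ still holds, the inductive hypothesis still places $g\in S_{i-1}$, and so $G_i\cap(x_i-1,x_i]\subseteq S_i$, which (by the density of $G_i$) supplies at least $q_1\cdots q_i$ elements of $S_i$ in that interval. The delicate point --- and the one where the definition of $x_i$ is engineered precisely to make the induction close --- is the passage from the non-strict $y\ge x-1$ to the strict $g > x_{i-1} - 1$: the loss $(q_i-1)a_i$ that appears when the coefficient $n$ is pushed to its maximum $q_i - 1$ is exactly the increment built into $x_i$ over $x_{i-1}$, which is what keeps the strict inequality alive from step to step.
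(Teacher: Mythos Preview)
Your proof is correct and follows essentially the same approach as the paper: both identify $G_i=\frac{1}{q_1\cdots q_i}\ZZ$, both use the coset decomposition $G_i=G_{i-1}+\{0,a_i,\ldots,(q_i-1)a_i\}$ (your ``normal form''), and both induct using the identity $x_i=x_{i-1}+(q_i-1)a_i$ to pass from level $i$ to level $i-1$. The only organizational difference is that the paper first reduces the second assertion to the first via the $\NN$-module property $S_i+\NN=S_i$ and discreteness, and then proves the first by induction, whereas you prove the inclusion $G_i\cap[x-1,\infty)\subseteq S_i$ directly by induction and read off both assertions from it; this is a slightly more streamlined packaging of the same argument.
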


\begin{proof}
Notice that $G_i=\frac{1}{q_1\cdots q_i}\ZZ$. Thus $S_i\cap[x-1,\infty)=G_i\cap[x-1,\infty)$ for all $x>x_i$ if and only if 
$|S_i\cap[x-1,x)|=q_1\cdots q_i$ for all $x>x_i$ if and only if $|S_i\cap[x-1,x)|\ge q_1\cdots q_i$ for all $x>x_i$. Also since 
$S_i+\NN=S_i$, the fact that $|S_i\cap(x_i-1,x_i]|\ge q_1\cdots q_i$ implies $|S_i\cap(x-1,x]|\ge q_1\cdots q_i$ for all $x\ge x_i$.
Moreover,  if $x$ is a fixed real number, since $S_i$ is discrete there exists $\epsilon_0>0$ such that for all $0<\epsilon\le\epsilon_0$ the following equality between sets holds $S_i\cap[x-1,x)=S_i\cap(x-\epsilon-1,x-\epsilon]$ . Therefore, the fact that 
$|S_i\cap(x_i-1,x_i]|\ge q_1\cdots q_i$ also implies $|S_i\cap[x-1,x)|\ge q_1\cdots q_i$ for all $x>x_i$.

Assume that $k=1$. Then $S_1$ can be presented as a disjoint
union of $\NN$-modules
$$S_1=\NN\cup(a_1+\NN)\cup\dots\cup((q_1-1)a_1+\NN).$$

If  $0\le j\le (q_1-1)$ then $(x_1-ja_1)\ge 0$ and $|(ja_1+\NN)\cap(x_1-1,x_1]|=|\NN\cap(x_1-ja_1-1,x_1-ja_1]|=1$. 
Thus $|S_1\cap(x_1-1,x_1]|=q_1$.

Assume that $k>1$. By induction it  suffices to assume that the
statement is true for $i\le k-1$. Notice that
$$S_k\supset S_{k-1}\cup(a_k+S_{k-1})\cup\dots\cup((q_k-1)a_k+S_{k-1}),$$
where the union on the right is a disjoint union of $S_{k-1}$-modules.

If $0\le j\le(q_k-1)$ then $(x_k-ja_k)\ge x_{k-1}$ and 
$|(ja_k+S_{k-1})\cap(x_k-1,x_k]|=|S_{k-1}\cap(x_k-ja_k-1,x_k-ja_k]|\ge q_1\cdots q_{k-1}$.
Thus, $|S_k\cap(x_k-1,x_k]|\ge q_1\cdots q_k$.
\end{proof}

\begin{Corollary}\label{estimate}
Under the assumptions of corollary \ref{density} suppose also that
$a_{i+1}>q_ia_i$ for all $1\le i\le k-1$. Then $|S_i\cap[n-1,n)|=
q_1\cdots q_i$ for all integers $n\ge q_ia_i$.
\end{Corollary}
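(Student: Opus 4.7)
The plan is to reduce the corollary directly to the second assertion of Lemma~\ref{density}, which already gives $|S_i\cap[x-1,x)|=q_1\cdots q_i$ for every real $x>x_i$, where $x_i=(q_1-1)a_1+\cdots+(q_i-1)a_i$. Thus the whole content of the corollary is to show that the extra hypothesis $a_{j+1}>q_ja_j$ forces $q_ia_i>x_i$, so that any integer $n\ge q_ia_i$ automatically satisfies $n>x_i$ and we may quote the lemma.

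First I would rewrite the desired inequality $q_ia_i>x_i$ as
\[
a_i \;>\; x_{i-1} \;=\; \sum_{j=1}^{i-1}(q_j-1)a_j,
\]
which is the real task. Then I would establish this bound by a short induction on $i$ using the hypothesis $a_{j+1}>q_ja_j$ cascaded downward. The base case is vacuous (for $i=1$ the right-hand side is empty and $a_1>0$). For the inductive step, starting from $a_i>q_{i-1}a_{i-1}=(q_{i-1}-1)a_{i-1}+a_{i-1}$ and applying the same inequality to $a_{i-1}>q_{i-2}a_{i-2}=(q_{i-2}-1)a_{i-2}+a_{i-2}$, and so on down to $a_2>(q_1-1)a_1+a_1$, yields
\[
a_i \;>\; (q_{i-1}-1)a_{i-1}+(q_{i-2}-1)a_{i-2}+\cdots+(q_1-1)a_1 + a_1 \;>\; x_{i-1},
\]
since $a_1>0$. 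This gives $q_ia_i=(q_i-1)a_i+a_i>(q_i-1)a_i+x_{i-1}=x_i$.

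Finally I would combine the two pieces: for any integer $n\ge q_ia_i$ we now have $n>x_i$, so the second half of Lemma~\ref{density} applies and yields $|S_i\cap[n-1,n)|=q_1\cdots q_i$, which is the desired equality. There is no serious obstacle here; the only thing to be careful about is keeping track of strict versus non-strict inequalities so that $n\ge q_ia_i$ really does produce $n>x_i$, which is ensured by the strict inequality $a_{j+1}>q_ja_j$ (giving the extra ``$+a_1$'' in the telescoped bound).
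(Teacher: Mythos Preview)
Your proof is correct and follows exactly the paper's approach: the paper's one-line proof simply asserts that $(q_1-1)a_1+\cdots+(q_i-1)a_i<q_ia_i$ under the extra hypothesis and then tacitly invokes Lemma~\ref{density}, which is precisely what you have done with the added detail of the telescoping argument verifying that inequality.
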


\begin{proof}
It suffices to notice that $(q_1-1)a_1+\dots+(q_i-1)a_i<q_ia_i$ in this case.
\end{proof}

We will now give examples of value semigroups with unexpected rate of growth of the function $\phi(n)=|S\cap(0,ns_0)|$.

\begin{Example}\label{EXF1}($n\sqrt{n}$ rate of growth)

Let $R=k[x,y]_{(x,y)}$ where $k$ is an algebraically closed field. Let $\nu$ be a valuation of the quotient field of $R$ defined by its generating sequence $\{P_i\}_{i\ge 0} $ as follows

$
\begin{array}{ll}
P_0=x, & \nu(P_0)=1\\
P_1=y, & \nu(P_1)=4+\frac{1}{2}\\
P_2=P_1^2-x^{9}, & \nu(P_2)=16+\frac{1}{2^2}\\
P_3=P_2^2-x^{28}P_1, & \nu(P_3)=64+\frac{1}{2^3}\\
P_{k+1}=P_k^2-x^{7\cdot 4^{k-1}}P_{k-1},\quad & \nu(P_{k+1})=4^{k+1}+\frac{1}{2^{k+1}}.\\
\end {array}
$

Denote by $S$ the semigroup $S_R(\nu)=\nu(m_R\backslash\{0\})$. Then $\phi(n)$
grows like $n\sqrt{n}$.

\end{Example}

\begin{proof} 
We will show that $\frac{1}{6}n\sqrt{n}<\phi(n)<\frac{4}{3}n\sqrt{n}$.

Set $a_i=\nu(P_i)$ for all $i\ge 1$. Then $S$ is a subsemigroup of
$\QQ_+$ generated by $1,a_1,a_2,\dots$. With notation of Proposition
\ref{planesemigr} we have $s_i=q_i=2$ for all $i\ge 1$ and $q_ia_i\le
2\cdot 4^i+1<4^{i+1}<a_{i+1}$. This shows that $\nu$ is well
defined. Also, by corollary \ref{estimate} we find a lower bound
on $|S\cap[n-1,n)|$ for $n\ge 4^i$:
$$|S\cap[n-1,n)|\ge|S_{i-1}\cap[n-1,n)|=2^{i-1}.$$

If $n>1$ set $i=\lfloor\log_4 n\rfloor$. Then $4^i\le n<4^{i+1}$ and
$2^{i-1}\le |S\cap[n-1,n)|\le 2^i$. Thus for all $n\in\NN_+$ we
have
$$\int_{n-1}^{n}\frac{\sqrt{t}}{4}dt<\frac{\sqrt{n}}{4}<|S\cap[n-1,n)|\le\sqrt{n}<\int_{n}^{n+1}\sqrt{t}dt.$$
Then
$$|S\cap(0,n)|=|S\cap[1,n)|<\int_2^{n+1}\sqrt{t}dt=\frac{2}{3}((n+1)\sqrt{n+1}-2\sqrt 2)<\frac{4}{3}n\sqrt{n}$$
and
$$|S\cap(0,n)|=3+|S\cap[4,n)|>3+\int_4^n\frac{\sqrt{t}}{4}dt>3+\frac{1}{6}(n\sqrt{n}-8)>\frac{1}{6}n\sqrt{n}.$$

A more precise estimate can be obtained for $n=4^k$. By induction on $k\in\NN_+$ we see that  $\frac{8^k}{3}<\phi(4^k)<\frac{8^k}{2}$, since
$$\phi(4)=3,\quad\quad 8/3<3<4$$
and
$$\phi(4^{k+1})=\phi(4^k)+|S\cap[4^k,4^{k+1})|<\frac{8^k}{2}+3\cdot2^k\cdot4^k<\frac{8^{k+1}}{2}$$
and
$$\quad\phi(4^{k+1})=\phi(4^k)+|S\cap[4^k,2\cdot4^k)|+|S\cap[2\cdot4^k,4^{k+1})|>\frac{8^k}{3}+2^{k-1}\cdot4^k+2\cdot2^k\cdot4^k>\frac{8^{k+1}}{3}.$$
\end{proof}

This example can be generalized to a construction of a value semigroup $S$ such that $\phi(n)$ grows like a power function $n^\a$, where $\a\in\QQ$, with natural restriction $1<\a<2$.  

\begin{Example}\label{Ex1}($n^{\a}$ rate of growth)

Suppose that $0<p<q$ are coprime integers. Let $r=2^q$ and
$s=2^p$. Let $R=k[x,y]_{(x,y)}$ where $k$ is an algebraically closed field. Let $\nu$ be a valuation of the
quotient field of $R$ defined by its generating sequence
$\{P_i\}_{i\ge 0} $ as follows

$
\begin{array}{ll}
P_0=x, & \nu(P_0)=1\\
P_1=y, & \nu(P_1)=r+\frac{1}{s}\\
P_2=P_1^s-x^{sr+1}, & \nu(P_2)=r^2+\frac{1}{s^2}\\
P_3=P_2^s-x^{(sr-1)r}P_1, & \nu(P_3)=r^3+\frac{1}{s^3}\\
P_{k+1}=P_k^s-x^{(sr-1)r^{k-1}}P_{k-1},\quad & \nu(P_{k+1})=r^{k+1}+\frac{1}{s^{k+1}}.\\
\end {array}
$

Denote by $S$ the semigroup $S_R(\nu)=\nu(m_R\backslash\{0\})$. Then $\phi(n)$
grows like $n^{1+p/q}$.

\end{Example}

\begin{proof}
Set $a_i=\nu(P_i)$ for all $i\ge 1$. Then $S$ is a subsemigroup of
$\QQ_+$ generated by $1,a_1,a_2,\dots$. With notation of Proposition
\ref{planesemigr} we have $s_i=q_i=s$ for all $i\ge 1$ and $q_ia_i\le
s\cdot r^i+1<r^{i+1}<a_{i+1}$. This implies that $\nu$ is well defined.
Also, by corollary \ref{estimate} we find a lower bound on
$|S\cap[n-1,n)|$ for $n\ge r^i$:
$$|S\cap[n-1,n)|\ge|S_{i-1}\cap[n-1,n)|=s^{i-1}.$$

If $n>1$ set $i=\lfloor\log_r n\rfloor$. Then $r^i\le n<r^{i+1}$ and
$s^{i-1}\le |S\cap[n-1,n)|\le s^i$. Thus since $s^i=(r^i)^{p/q}$ and $s^{i-1}=\frac{(r^{i+1})^{p/q}}{s^2}$ for all $n\in\NN_+$ we
have
$$\int_{n-1}^{n}\frac{t^{p/q}}{s^2}dt<\frac{n^{p/q}}{s^2}<|S\cap[n-1,n)|\le n^{p/q}<\int_{n}^{n+1}t^{p/q}dt.$$
Then
$$|S\cap(0,n)|=|S\cap[1,n)|<\int_2^{n+1}t^{p/q}dt=\frac{q}{p+q}((n+1)^{1+p/q}-2^{1+p/q})<\frac{3q}{p+q}n^{1+p/q}<3n^{1+p/q}$$
and
$$|S\cap(0,n)|=r-1+|S\cap[r,n))|>r-1+\int_r^n\frac{t^{p/q}}{s^2}dt=r-1+\frac{q}{s^2(p+q)}(n^{1+p/q}-rs)>\frac{n^{1+p/q}}{2s^2}.$$
That is $\frac{1}{2s^2}n^{1+p/q}<\phi(n)<3n^{1+p/q}$.
\end{proof}

We remark that in the above construction it is necessary to have the strict inequality $p<q$. However, the maximal rate of growth of $n^2$ is also achievable on a rational rank 1 semigroup of a valuation centered in a 2-dimensional  polynomial ring, as we show in the next example.

\begin{Example}\label{Exbiggrowth}($n^2$ rate of growth)

Let $R=k[x,y]_{(x,y)}$ where $k$ is an algebraically closed field. Let $\nu$ be a valuation of the
quotient field of $R$ defined by its generating sequence
$\{P_i\}_{i\ge 0} $ as follows

$
\begin{array}{ll}
P_0=x, & \nu(P_0)=1\\
P_1=y, & \nu(P_1)=1+\frac{1}{2}\\
P_2=P_1^2-x^{2+1}, & \nu(P_2)=2+1+\frac{1}{2^2}\\
P_3=P_2^2-x^{2^2+2-1}P_1, & \nu(P_3)=2^2+2+\frac{1}{2}+\frac{1}{2^3}\\
P_{k+1}=P_k^2-x^{2^k+2^{k-1}-2^{k-2}}P_{k-1},\quad & \nu(P_{k+1})=2^{k}+2^{k-1}+2^{k-3}+\cdots+2^{-k-1}.\\
\end {array}
$

Denote by $S$ the semigroup $S_R(\nu)=\nu(m_R\backslash\{0\})$. Then $\phi(n)$
grows like $n^{2}$.

\end{Example}

\begin{proof} Set $a_i=\nu(P_i)$ for all $i\ge 1$. Then $S$ is the subsemigroup of $\QQ_+$ generated by $1,a_1,a_2,\ldots$. We have $q_i=2$ for all $i\ge 1$.
Solving the recursion relation, we have
$$
a_i=2^{i-1}+\frac{1}{3}(2^i-\frac{1}{2^i})
$$
for $i\ge 1$.
We have $q_{i-1}a_{i-1}=2a_{i-1}=\frac{5}{6}2^i-\frac{1}{3}\frac{1}{2^{i-2}}<2^i$.
Corollary \ref{estimate} shows that
$$
|S\cap [n-1,n)|\ge |S_{i-1}\cap [n-1,n)|=2^{i-1}
$$
for $n\ge 2^i$. For $n>1$, set $i=\lfloor \log_2n\rfloor$, so that $2^i\le n<2^{i+1}$. We have
$$
\frac{1}{4}\int_{n-1}^ntdt<\frac{n}{4}<\frac{2^{i+1}}{4}=2^{i-1}\le|S\cap[n-1,n)|.
$$
Thus for $n\ge 4$,
$$
|S\cap(0,n)|=3+|S\cap[4,n)|>3+\frac{1}{4}\int_4^ntdt>\frac{n^2}{8}.
$$
Since $|S\cap(0,n)|<\binom{1+n}{2}$ by Corollary \ref{Corollary2}, $\phi(n)$ grows at the rate of $n^2$.
\end{proof}

 Another interesting example is of logarithmic growth.

\begin{Example}\label{Ex2} ( $n\log_{10} n$ rate of growth)

Let $R=k[x,y]_{(x,y)}$ where $k$ is an algebraically closed field. Let $\nu$ be a valuation of the quotient field of $R$ defined by its generating sequence $\{P_i\}_{i\ge 0} $ as follows

$
\begin{array}{ll}
P_0=x, & \nu(P_0)=1\\
P_1=y, & \nu(P_1)=10+\frac{1}{2}\\
P_2=P_1^2-x^{21}, & \nu(P_2)=10^2+\frac{1}{2^2}\\
P_3=P_2^2-x^{190}P_1, & \nu(P_3)=10^4+\frac{1}{2^3}\\
P_{k+1}=P_k^2-x^{\a(k)}P_{k-1},\quad & \nu(P_{k+1})=10^{2^k}+\frac{1}{2^{k+1}},\quad\quad \a(k)=2\cdot 10^{2^{k-1}}-10^{2^{k-2}}.\\
\end {array}
$

Denote by $S$ the semigroup $S_R(\nu)=\nu(m_R\backslash\{0\})$. Then $\phi(n)$ grows like $n\log_{10} n$.
\end{Example}

\begin{proof}
If  $n\ge 10$ let $k=\lfloor\log_2\log_{10} n\rfloor$. Then $10^{2^k}\le n<10^{2^{k+1}}$ and  $2^k\le |S\cap[n-1,n)|\le 2^{k+1}$.  Thus for all $n\ge 10$ we have
$$\int_{n-1}^n\frac{\log_{10}t}{2}dt<\frac{\log_{10}n}{2}\le |S\cap[n-1,n)|\le 2\log_{10}n<\int_{n}^{n+1}2\log_{10}tdt$$
and
$$\frac{n\log_{10}n}{4}<9+\int_{10}^n\frac{\log_{10}t}{2}dt<|S\cap[0,n)|<9+\int_{11}^{n+1} 2\log_{10}tdt<2n\log_{10}n.$$
That is $\frac{1}{4}n\log_{10}n<\phi(n)<2n\log_{10}n$.
\end{proof}

\section{When is a semigroup a  value semigroup?}\label{When}

Corollary \ref{Corollary1} gives a necessary condition for a rank 1 well ordered semigroup $S$
consisting of positive elements of $\RR$ to be the value semigroup $S^R(\nu)$ of a valuation
dominating some local domain $R$. The condition is:
\begin{equation}\label{eq2}
\mbox{There exists $c>0$ and $d\in \ZZ_+$ such that $|S\cap (0,ns_0)|<cn^d$ for all $n$}
\end{equation}
where $s_0$ is the smallest element of $S$. An interesting question is if (\ref{eq2}) is in fact
sufficient. (\ref{eq2}) is  sufficient in the case when $d=1$, as we now show. Suppose that
$S\subset \RR_+$ is a semigroup consisting of positive elements which contains a smallest element
$s_0$. Suppose that there exists $c>0$  such that
\begin{equation}\label{eq12}
|S\cap [(0,ns_0)|<cn
\end{equation}
for all $n\in\NN$. By Lemma \ref{Lemma2} below, we may assume that $S\subset \QQ_+$ is finitely
generated by some elements $\lambda_1,\ldots,\lambda_r$. There exists $\alpha\in\QQ_+$ such that
there exists $a_i\in\NN_+$ with $\lambda_i=\alpha a_i$ for $1\le i\le r$, and
$\mbox{gcd}(a_1,\ldots,a_r)=1$. Let $k[t]$ be a polynomial ring over a field $k$. Let
$\nu(f(t))=\alpha\mbox{ord}(f(t))$ for $f(t)\in k[t]$. $\nu$ is a valuation of $k(t)$. Let $R$ be
the one dimensional local domain
$$
R=k[t^{a_1},\ldots,t^{a_r}]_{(t^{a_1},\ldots,t^{a_r})}.
$$
The quotient field of $R$ is $k(t)$, and $\nu$ dominates $R$. We have that $S=\nu(m_R\backslash \{0\})=S_R(\nu)$.

\begin{Lemma}\label{Lemma1} Suppose that $S\subset \RR_+$ is a semigroup consisting of positive elements which contains a smallest element
$s_0$. Suppose that there exists $c>0$ and $d\in\NN_+$ such that
\begin{equation}\label{eq11}
|S\cap (0,ns_0)|<cn^d
\end{equation}
for all $n\in\NN$. Then $S$ is well ordered of ordinal type $\omega$ and has rational rank $\le d$.
\end{Lemma}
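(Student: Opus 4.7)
The plan is to split the lemma into two independent claims, the ordinal structure of $S$ and the bound on its rational rank, and handle them separately.

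First I would address the order type. Since $s_0 \in S$ and $S$ is closed under addition, the integer multiples $s_0, 2s_0, 3s_0, \ldots$ all lie in $S$, so $S$ is infinite. Moreover, for any $s \in S$, choosing an integer $n > s/s_0$ we see that $\{t \in S : t < s\} \subseteq S \cap (0, ns_0)$, which is finite by hypothesis (\ref{eq11}). Thus every element of $S$ has only finitely many strict predecessors, and combined with infiniteness this forces $S$ to be order isomorphic to $\NN$, i.e.\ of ordinal type exactly $\omega$.

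For the rational rank bound I would argue by contradiction. Suppose the rational rank of $S$ exceeds $d$, so there exist elements $\beta_1, \ldots, \beta_{d+1} \in S$ that are $\QQ$-linearly independent. For each positive integer $N$, consider the $N^{d+1}$ tuples $(k_1, \ldots, k_{d+1})$ with $0 \le k_i \le N-1$; by linear independence they produce $N^{d+1}$ pairwise distinct real numbers of the form $\sum_{i=1}^{d+1} k_i \beta_i$, each lying in $S \cup \{0\}$ since $S$ is a subsemigroup. Discarding the zero tuple leaves $N^{d+1}-1$ distinct elements of $S$, all bounded above by $(N-1)M$ where $M = \beta_1 + \cdots + \beta_{d+1}$. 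Taking $n = \lceil NM/s_0 \rceil$ places all of these sums inside $S \cap (0, ns_0)$, so hypothesis (\ref{eq11}) forces $N^{d+1}-1 < cn^d$. Since $n = O(N)$ by construction, the right hand side is $O(N^d)$, which contradicts $d+1 > d$ for $N$ sufficiently large.

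The overall argument is essentially a pigeonhole count against the polynomial growth bound, and no part of it looks genuinely hard. The only point requiring care is choosing $n$ proportional to $N$ so that the $N^{d+1}$ distinct sums fit inside the interval $(0, ns_0)$ to which (\ref{eq11}) applies; once this is arranged, $\QQ$-linear independence of $\beta_1,\ldots,\beta_{d+1}$ supplies the required distinctness and the growth hypothesis does the rest.
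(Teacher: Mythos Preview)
Your proposal is correct and follows essentially the same approach as the paper. Both arguments handle the ordinal-type claim by observing that every bounded initial segment of $S$ is finite, and both bound the rational rank by counting the $\NN$-combinations of a rationally independent set (the paper works with an arbitrary $t$-tuple and concludes $t\le d$ directly, while you take $t=d+1$ and derive a contradiction, but the underlying count is identical).
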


\begin{proof} The fact that $S$ is well ordered of ordinal type $\omega$ is immediate from
(\ref{eq11}).

We will prove that the rational rank of $S$ is $\le d$. After rescaling $S$ by multiplying by
$\frac{1}{s_0}$, we may assume that $s_0=1$. Suppose that $t\in\NN$ and $S$ has rational rank $\ge
t$. Then there exist $\gamma_1,\ldots,\gamma_t\in S$ which are rationally independent. Let
$b\in\NN$ be such that $\mbox{max}\{\gamma_1,\ldots,\gamma_t\}<b$. For $e\in \RR_+$, we have
$$
\begin{array}{lll}
|S\cap(0,e)|&\ge&|\{a_1\gamma_1+\cdots+a_t\gamma_t\mid a_1,\ldots,a_t\in\NN\mbox{ and
}a_1\gamma_1+\cdots+a_t\gamma_t<e\}|-1\\
&\ge&|\{a_1\gamma_1+\cdots+a_t\gamma_t\mid a_i\in\NN\mbox{ and }0\le a_i<\frac{e}{tb}\mbox{ for
}1\le
i\le t\}|-1\\
&=&|\{(a_1,\ldots,a_t)\in\NN^t\mid 0\le a_i<\frac{e}{tb}\mbox{ for }1\le i<t\}|-1
\end{array}
$$
since $\gamma_1,\ldots,\gamma_t$ are rationally independent.

For $a\in\NN$ let $n=abt$. Then we see that
$$
|S\cap(0,(n+1))|\ge a^t-1=\left(\frac{1}{bt}\right)^tn^t-1.
$$
By (\ref{eq11}), we see that $t\le d$.
\end{proof}

\begin{Lemma}\label{Lemma2} Suppose that $S\subset \RR_+$ is a semigroup consisting of positive elements which contains a smallest element
$s_0$. Suppose that there exists $c>0$  such that
\begin{equation}\label{eq12}
|S\cap (0,ns_0)|<cn
\end{equation}
for all $n\in\NN$. Then $S$ is finitely generated, and the group generated by $S$ is isomorphic to
$\ZZ$.
\end{Lemma}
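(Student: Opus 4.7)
The plan is to reduce to the case $s_0=1$ with $S\subset\QQ_+$, show that the denominators of elements of $S$ in lowest form are uniformly bounded, and then invoke the classical structure of subsemigroups of $\ZZ_+$. After rescaling by $1/s_0$ I may assume $s_0=1\in S$, so $\NN_+\subset S$. By Lemma \ref{Lemma1} the rational rank of $S$ is one, hence the group $G$ generated by $S$ lies in $\QQ\cdot s_0=\QQ$; in particular $S\subset\QQ_+$.

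The heart of the argument is to bound the denominators appearing in $S$. Fix $\lambda=a/b\in S$ written in lowest form with $b\ge 1$, and consider the set
\[
T_\lambda \;=\; \{\,j\lambda+k \mid 0\le j\le b-1,\ k\in\NN\,\}\setminus\{0\},
\]
which is a subset of $S$ because $\NN_+\subset S$ and $j\lambda\in S$ for $j\ge 1$. Since $\gcd(a,b)=1$, the residues $ja/b \pmod 1$ for $0\le j\le b-1$ are pairwise distinct, so the elements of $T_\lambda$ are pairwise distinct. A direct count then gives $|T_\lambda\cap(0,n)|\ge bn-C_\lambda$ for all integers $n\gg 0$, where $C_\lambda$ depends only on $\lambda$. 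Combined with the hypothesis $|S\cap(0,n)|<cn$, dividing by $n$ and letting $n\to\infty$ forces $b\le c$. Thus every element of $S$ has denominator at most $\lfloor c\rfloor$, and setting $N=\mathrm{lcm}(1,2,\ldots,\lfloor c\rfloor)$ yields $S\subset\tfrac{1}{N}\ZZ$.

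It remains to deduce finite generation and the shape of $G$. Now $NS$ is a subsemigroup of $\ZZ_+$. Letting $d=\gcd(NS)$, the rescaled semigroup $(NS)/d$ has gcd one and therefore, by the Sylvester--Frobenius theorem, contains every sufficiently large positive integer. It follows that $NS$ is generated by finitely many elements realising the gcd, together with the finite set $NS\cap[1,M]$ for some $M$; hence $NS$, and therefore $S$, is finitely generated. The group $G$ is a nontrivial subgroup of $\tfrac{1}{N}\ZZ\cong\ZZ$ (nontrivial because $1\in S$), so $G\cong\ZZ$.

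The main obstacle is the counting estimate $|T_\lambda\cap(0,n)|\ge bn-C_\lambda$: one must check that all $b$ residue classes contribute $\sim n$ elements each, which is precisely where the coprimality $\gcd(a,b)=1$ enters to guarantee distinctness across classes, and one must control the boundary terms so that the error $C_\lambda$ is bounded independently of $n$ (although it may depend on $\lambda$).
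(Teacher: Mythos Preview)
Your proof is correct and follows a route close in spirit to the paper's but with a different (and arguably cleaner) mechanism for bounding denominators. Both arguments first use Lemma~\ref{Lemma1} to reduce to $s_0=1$ and $S\subset\QQ_+$. The paper then argues by contradiction: if $S$ were not finitely generated one could find, for any $e$, elements $\lambda_i=a_i/b_i$ with $e$ \emph{distinct} denominators $b_i>1$; B\'ezout produces for each $i$ an element of $S$ congruent to $\pm 1/b_i$ modulo~$1$, yielding at least $e$ distinct points of $S$ in every unit interval $(n,n+1)$ for large $n$, and taking $e>c$ contradicts the linear bound. You instead fix a \emph{single} $\lambda=a/b$ and use all $b$ residue classes $0,\lambda,2\lambda,\ldots,(b-1)\lambda\pmod 1$ at once to force $b\le c$, then invoke the classical structure of numerical semigroups. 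Your approach gives the explicit uniform bound $b\le\lfloor c\rfloor$ on every denominator and avoids the contradiction setup; the paper's approach avoids the appeal to numerical-semigroup finiteness, reaching the contradiction directly from the growth bound.

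One small remark: the Sylvester--Frobenius (Chicken McNugget) theorem is usually stated for a \emph{finite} set of coprime generators, whereas you apply it to the whole semigroup $(NS)/d$. Your parenthetical ``finitely many elements realising the gcd'' is exactly the right fix---pick finitely many elements of $(NS)/d$ with $\gcd$ equal to $1$, apply the theorem to those to get $[M,\infty)\subset (NS)/d$, and then $(NS)/d$ is generated by its (finite) intersection with $[1,2M-1]$---but it would be worth making that step explicit rather than folding it into the conclusion.
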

\begin{proof}
By Lemma \ref{Lemma1}, $S$ has rational rank 1, so we may assume that $S$ is contained in $\QQ_+$.
We may further assume that $s_0=1$. Suppose that $S$ is not finitely generated. Then for $e\in\NN$,
we can find $\lambda_1,\ldots,\lambda_e\in S$ such that $\lambda_i=\frac{a_i}{b_i}$ with
$a_i,b_i\in\NN_+$, $b_i>1$ for all $i$, $\mbox{gcd}(a_i,b_i)=1$ for all $i$ and $b_1,\ldots,b_e$
all distinct.

There exist $m_i,n_i\in\ZZ$ such that $m_ia_i+n_ib_i=1$ for $1\le i\le e$. Either $m_i>0$ and
$n_i<0$, or $m_i<0$ and $n_i>0$.  If $m_i>0$, then $|m_i|\lambda_i=|n_i|+\frac{1}{b_i}$. If
$m_i<0$, then $|m_i|\lambda_i=|n_i|-\frac{1}{b_i}$. Let $n_0=\mbox{max}\{|n_i|+1\mid 1\le i\le
i\}$. For $n\ge n_0$, we have that $n+\frac{1}{b_i}\in S\cap(n,n+1)$ if $m_i>0$, and
$(n+1)-\frac{1}{b_i}\in S\cap (n,n+1)$ if $m_i<0$. Thus $|S\cap (n,n+1)|\ge e$ for $n\ge n_0$,
which implies that
$$
|S\cap (n,n+1)|\ge en-en_0
$$
for $n\ge n_0$. For $e>c$, we have a contradiction to (\ref{eq12}).
\end{proof}


\begin{thebibliography}{}
\bibitem{Ab} S. Abhyankar, {\em On the valuations centered in a local domain}, Amer. J. Math. 78
(1956), 321 - 348.
\bibitem{BH} W. Bruns and J. Herzog, {\em Cohen-Macaulay Rings, revised edition},
Cambridge University Press, Cambridge 1996.
\bibitem{CT} S.D. Cutkosky and B. Teissier, {\em Semigroups of valuations on local rings}, to appear in Mich. Math. J, arXiv:0801.0449.
\bibitem{CT2} S.D. Cutkosky and B. Teissier, {\em Semigroups of valuations on local rings, II}, arXiv:0805.3788.
\bibitem{FJ} C. Favre and M. Jonsson, {\em The valuative tree}, Lecture Notes in Math 1853, Springer
Verlag, Berlin, Heidelberg, New York, 2004.
\bibitem{HS} W. Heinzer and J. Sally, {\em Extensions of valuations to the completion of a local
domain}, Journal of Pure and Applied Algebra 71 (1991), 175 - 185.
\bibitem{K} F.-V. Kuhlmann, {\em Value groups, residue fields, and bad places of algebraic function
fields}, Trans. Amer. Math. Soc. 40 (1936), 363 - 395.
\bibitem{M} S. MacLane, {\em A construction for absolute values in polynomial rings}, Trans. Amer.
Math. Soc. 40 (1936), 363 - 395.
\bibitem{MS} S. MacLane and O. Schilling, {\em Zero-dimensional branches of rank 1 on algebraic
varieties}, Annals of Math. 40 (1939), 507 - 520.
\bibitem{S} M. Spivakovsky, {\em Valuations in function fields of surfaces}, Amer. J. Math. 112
(1990), 107 - 156.
\bibitem{T} B. Teissier, {\em Valuations, deformations, and toric geometry}, Proceedings of the
Saskatoon Conference and Workshop on valuation theory (second volume), F.-V. Kuhlmann, S. Kuhlmann,
M. Marshall, editors, Fields Institute Communications, 33, 2003, 361 - 459.
\bibitem{ZS} O. Zariski and P. Samuel, {\em Commutative Algebra Volume II}, D. Van Nostrnad,
Princeton, 1960.
\end{thebibliography}
\end{document}